\newtheorem{theorem}{Theorem}[section]
\newtheorem{proposition}[theorem]{Proposition}
\newtheorem{problem}[theorem]{Problem}
\theoremstyle{plain}
\begin{document}

\title[On a problem of Talagrand concerning separately continuous functions]{On a problem of Talagrand concerning separately continuous functions}

\author{Volodymyr Mykhaylyuk}\email{vmykhaylyuk@ukr.net}
\address{Jan Kochanowski University in Kielce, Poland {\it and} Yurii Fedkovych Chernivtsi National University, Ukraine}

\author{Roman Pol}\email{pol@mimuw.edu.pl}
\address{University of Warsaw, Poland}

\subjclass[2000]{{Primary 54E52, 54G05,  46E15}}

\keywords{separately continuous function, Baire space, compact space, extremally disconnected space, Namioka property}

\begin{abstract}
We construct a separately continuous function $e:E\times K\to\{0,1\}$ on the product of a Baire space $E$ and a compact space $K$ such that no restriction of $e$ to any non-meager Borel set in $E\times K$ is continuous. The function $e$ has no points of joint continuity and hence it provides a negative solution of  Talagrand's problem in \cite{T}.
\end{abstract}

\maketitle

\section{Introduction}

All spaces considered in this note are completely regular and a space $E$ is Baire if and only if any intersection of countably many open sets dense in $E$ is dense. Our terminology follows \cite{GJ}, \cite{Ke} and \cite{K}.

In his seminal paper \cite{N}, Isaac Namioka proved the following theorem (a far-reaching extension of a classical result by Ren\'{e} Baire, cf. \cite{Ke}, 8M): {\it  if $f : E \times K \to \mathbb R$ is a separately continuous function on the product of a Baire space with some additional completeness properties $E$ and a compact space $K$, then there exists a comeager set $G$ in $E$ such that $f$ is jointly continuous at each point of $G \times K$}.

The spaces $E$ for which the assertion of this theorem holds true for any separately continuous function $f : E \times K \to \mathbb R$ with $K$ compact, are called Namioka spaces \cite{MN}. Namioka spaces are Baire \cite{SR} and there are numerous results describing some classes of Baire spaces which are Namioka spaces, cf. \cite{SR}, \cite{D}, \cite{Myk}.

The first example of a Baire space (even a Choquet space, i.e. $\alpha$-favorable space \cite{Ke}) which is not a Namioka space was given by Michael Talagrand \cite{T}, Th\'{e}or\`{e}me 2, and also in this paper the following problem was stated.

\begin{problem}[\cite{T}, Probl\`{e}me 3]
Let $X$ be a Baire space, let $Y$ be a compact space and let $f:X\times Y\to\mathbb R$ be a separately continuous function. Does $f$ have a point of joint continuity?
\end{problem}

The Talagrand problem attracted attention of many mathematicians, cf.  \cite[Problem 3.4]{God}, \cite[Problem 285]{GMZ}, \cite[Problem 7.1]{Mas}.

It was shown in \cite{M} that if $\beta \mathbb N\setminus \mathbb N$ is covered by nowhere dense closed $P$-sets (i.e., sets $A$ such that any $G_\delta$-set containing $A$ is a neighbourhood of $A$), then the Talagrand problem has a negative solution. Since in some models of ZFC this condition is satisfied, cf. \cite{BFM}, the result of Mykhaylyuk \cite{M} provides a negative answer to the Talagrand problem in some models of set theory (however, this condition fails under CH, cf. \cite{KvMM}).

The aim of this note is to show that the approach of Mykhaylyuk, combined with some construction of Kunen, van Mill and Mills, provides a negative solution of the Talagrand problem, without any additional set-theoretic assumptions.

In fact, we shall obtain the following stronger result.

\begin{theorem}\label{th:1.1}
There exists a separately continuous function $e:E\times K\to\{0,1\}$ on the product of a Baire space $E$ and a compact space $K$ such that no restriction of $e$ to any non-meager Borel set in $E\times K$ is continuous.
\end{theorem}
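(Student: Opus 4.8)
The plan is to carry out the construction of Mykhaylyuk from \cite{M} --- which, from a cover of $\omega^{*}$ by nowhere dense closed $P$-sets, produces a separately continuous $\{0,1\}$-valued function with no point of joint continuity on a product of a Baire space with a compact space --- but over a compactum furnished by Kunen, van Mill and Mills that carries such a cover outright in ZFC, and to do it robustly enough to obtain the stronger oscillation property at the same time. So I would first fix $K$ to be the zero-dimensional, extremally disconnected compact space of Kunen--van Mill--Mills together with a family $\{F_{t}:t\in T\}$ of nowhere dense closed $P$-sets whose union is dense in $K$, where $T$ is a tree of height $\omega_{1}$ ordered so that $F_{t'}\subseteq F_{t}$ for $t\prec t'$ while the $F_{t}$ are ``separated'' along incomparable branches. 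The two features used throughout are: (i) each $F_{t}$, being a $P$-set in a zero-dimensional space, has a decreasing base of clopen neighbourhoods $\{C_{t}^{n}:n\in\omega\}$ whose intersection is still a neighbourhood of $F_{t}$; and (ii) enough freedom in $T$ to attach to each branch $b$ of $T$ a clopen set $U_{b}\subseteq K$ that runs through the $C_{t}^{n}$ along $b$ in a \emph{non-convergent} way, so that branches that diverge in $T$ carry clopen sets separating from one another near the dense family $\{F_{t}\}$.

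For the Baire space, let $E$ be the space of branches of $T$, topologised so that a basic neighbourhood of $b$ is determined by finitely much of the position of $b$ in $T$ together with countably much clopen-neighbourhood data along $b$; since $T$ has uncountable height and the clopen data is countable, $E$ is (close to) a $P$-space, so countable intersections of basic neighbourhoods stay open --- which both eases the verification that $E$ is Baire, indeed $\alpha$-favorable as in Talagrand's framework, and keeps the meager ideal on $E$ (hence on $E\times K$) under control. Define $e:E\times K\to\{0,1\}$ by $e(b,y)=1$ if and only if $y\in U_{b}$. For fixed $b$ the set $U_{b}$ is clopen, so $e(b,\cdot)$ is continuous; for fixed $y$ one reads off from the topology of $E$ and from (i) that $\{b:y\in U_{b}\}$ is clopen in $E$; hence $e$ is separately continuous, and it is $\{0,1\}$-valued by construction. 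The Choquet strategy witnessing that $E$ is Baire rests on (i): that is precisely what makes a countable descending sequence of the conditions defining neighbourhoods still define a nonempty condition.

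It remains to exclude continuous restrictions to non-meager Borel sets. Since $e$ takes only the values $0$ and $1$, a continuous restriction $e|_{B}$ is the same as a partition $B=B_{0}\cup B_{1}$ into relatively clopen, hence Borel, pieces with $B_{i}\subseteq e^{-1}(i)$; so it comes down to showing that neither $e^{-1}(0)$ nor $e^{-1}(1)$ contains a non-meager Borel subset of $E\times K$. Suppose, towards a contradiction, that some Borel $B\subseteq e^{-1}(1)$ is non-meager; by the Baire property of Borel sets, $B$ is comeager in a basic box $V\times W$, where $V$ is a neighbourhood in $E$ pinned down by a node $s\in T$ and $W$ is a clopen set meeting some $F_{t}$ with $t$ extending $s$. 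A Kuratowski--Ulam--type category argument over $E\times K$ --- legitimate for these spaces, using that $E$ is Baire and $P$-like and that $W$ is compact zero-dimensional --- then produces inside $V\times W$ a point $y$ and two branches $b_{1},b_{2}\in V$ that diverge in $T$ with $(b_{i},y)\in B\subseteq e^{-1}(1)$, i.e.\ $y\in U_{b_{1}}\cap U_{b_{2}}$. But by (ii) this intersection can be arranged to miss a clopen $W'\subseteq W$ that still meets $F_{t}$; running the argument again inside $V\times W'$ puts such a $y$ into $U_{b_{1}}\cap U_{b_{2}}\cap W'=\varnothing$, a contradiction. The case $B\subseteq e^{-1}(0)$ is symmetric, and in particular this recovers that $e$ has no point of joint continuity.

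The genuinely new and most delicate part is this last step: one has to pin down exactly which divergence of $b_{1},b_{2}$ in $T$ is forced, arrange the matching near-disjointness of $U_{b_{1}}$ and $U_{b_{2}}$ along the dense family $\{F_{t}\}$, and make the Kuratowski--Ulam step genuinely work for these non-metrizable spaces --- together with the companion check that $E$ is Baire in the first place. All of this passes through the $P$-set property (i), the single point where the structure of the Kunen--van Mill--Mills compactum beyond bare compactness is essential; engineering (ii) so that (i) can simultaneously carry the Choquet strategy on $E$ and the category argument on $E\times K$ is where essentially all of the work lies.
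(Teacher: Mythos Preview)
Your proposal has the correct high-level architecture --- combine Mykhaylyuk's construction with the Kunen--van Mill--Mills compactum --- but there is a factual error at the foundation and the remaining steps are too vague to constitute a proof.

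The Kunen--van Mill--Mills space $X$ (non-decreasing functions $\omega_2\to\omega_1+1$) is \emph{not} extremally disconnected; it is merely a closed subspace of a product of ordinal spaces. The paper does not use $X$ itself as $K$: it passes to the Gleason projective cover $\pi:K\to X$, which \emph{is} extremally disconnected, and pulls back the cover $\mathcal E$ of $X$ by nowhere dense closed $P$-sets to a family $\mathcal P$ on $K$ (irreducibility of $\pi$ preserves the needed properties). Extremal disconnectedness is not decorative: it is exactly what guarantees, at the end of the inductive construction, that the two $\sigma$-compact sets $A_0=\bigcup_n\{x\in L_n:f_n(x)=0\}$ and $A_1=\bigcup_n\{x\in L_n:f_n(x)=1\}$ have disjoint closures, so that $\bigcap_n N(f_n,L_n)\ne\varnothing$. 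Without it (or an $F$-space substitute), the Choquet strategy on $E$ and the oscillation argument both collapse.

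Beyond that, your description of $E$ as a space of branches of a tree with an unspecified topology, and of the clopen sets $U_b$ ``running through the $C_t^n$ in a non-convergent way'', is not concrete enough to verify separate continuity, let alone the Baire property. The paper's choice is much cleaner: $E=C(K,\{0,1\})$ with the topology of uniform convergence on members of $\mathcal P$, and $e(f,x)=f(x)$. Finally, the paper does \emph{not} use a Kuratowski--Ulam argument (which, as you note, is delicate here). Instead it runs a single Choquet-style induction: given any open rectangle $E'\times K'$ and closed nowhere dense sets $F_n$, it builds nested $N(f_n,L_n)\times B_n\subseteq (E'\times K')\setminus F_n$ together with witnesses $x_n,y_n\in L_n$ with $f_n(x_n)=0$, $f_n(y_n)=1$, and then extracts a single $f\in\bigcap_n N(f_n,L_n)$ and cluster points $x_0,y_0\in\bigcap_n B_n$ with $e(f,x_0)=0$, $e(f,y_0)=1$. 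This one construction simultaneously shows $E$ is Choquet and that $e$ is non-constant on every residual subset of every open box --- no category-transfer theorem is needed.
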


Since the two-valued function $e$ is not continuous on any non-empty open rectangle, $e$ has no points of joint continuity. Also, the function $e$ fails the Baire property, cf. \cite[\S 43]{K}. The space $E$ in Theorem 1.2 is Choquet.

The Namioka theorem triggered also an extensive investigation of the class of compact spaces $K$ such that the assertion of this theorem holds true, whenever $f : E \times K \to \mathbb R$ is separately continuous and $E$ is Baire, i.e. the spaces $K$ with the Namioka property, cf. \cite[VII.7]{DGZ} (in terminology of G. Debs - co-Namioka spaces).

The function $e:E\times K\to\{0,1\}$ we shall construct in the proof of Theorem \ref{th:1.1} gives rise to a function $\varphi:E\times X\to\{0,1\}$, where $X$ is a compact space with the Namioka property, such that $\varphi$ is continuous in the first variable, upper semi-continuous in the second variable, and has no points of joint continuity. This topic, related to the works by Bouziad \cite{Bou}, Debs \cite{Debs} and Mykhaylyuk \cite{M1}, is discussed in Section 4, cf. also Comment 5.4.

Finally, let us point out the following aspect of the topic.

Let $e : E \times K \to \{0,1\}$ be as in Theorem 1.2. Then $e$ induces a map $e^{\ast} : E \to C(K)$ into the algebra of real-valued continuous functions on $K$, defined by $e^{\ast }(f)(x)=e(f,x)$. The separate continuity of $e$ yields continuity of $e^{\ast}$ with respect to the pointwise topology in $C(K)$. In fact, our construction of $e$ guarantees that $e^{\ast}$ is continuous
in the weak topology of the Banach algebra $C(K)$. However, for no non-zero $u \in C(K)$, the multiplication operator $t \to u \cdot e^{\ast}(t)$ has a point of continuity with respect to the norm topology in $C(K)$, cf. \cite{KKM}. This observation is explained in Comment 5.3.

\section{Separately continuous functions without the Baire property}

We shall show in this section that certain extremally disconnected compact spaces $K$ give rise to separately continuous functions $e:E\times K\to\{0,1\}$ described in Theorem \ref{th:1.1}.

In the next section, we shall explain that an example from \cite{KvMM} yields readily a space $K$ which is needed for this approach.

Let $K$ be an extremally disconnected compact space which has a cover $\mathcal P$ by closed nowhere dense $P$-sets (let us recall that this means that for any $L \in \mathcal P$, any countable intersection of neighbourhoods of $L$ is a neighbourhood of $L$)  such that the union of each finite subcollection of $\mathcal P$ is contained in an element of $\mathcal P$, let $E=C(K,\{0,1\})$ be the space of all continuous functions $f:K\to\{0,1\}$ equipped with the topology of uniform convergence on elements of the family $\mathcal P$, i.e., basic neighbourhoods in $E$ of a continuous function $f\in E$ are the sets
$$
N(f,L)=\{g\in C(K,\{0,1\}):g|_L=f|_L\},\,\,L\in\mathcal P,\eqno(2.1)
$$
and let
$$
e:E\times K\to \{0,1\},\,\, e(f,x)=f(x),\eqno(2.2)
$$
be the evaluation map, cf. \cite[Example 3.4]{M}.

Let us recall that Choquet spaces form a very useful class of Baire spaces, cf. \cite{Ke}.

\begin{theorem}\label{th:2.1}
Let $e:E\times K\to \{0,1\}$ be as above. Then $E$ is a Choquet space and the map $e$ is separately continuous but it fails the Baire property on each non-meager Borel set in the product $E\times K$.
\end{theorem}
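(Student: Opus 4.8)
The plan is to settle the three assertions separately; separate continuity and the failure of the Baire property are comparatively routine, and the substantial point is that $E$ is a Choquet space.

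Separate continuity of $e$ is immediate: for fixed $f$ the section $x\mapsto e(f,x)=f(x)$ is continuous because $f\in C(K,\{0,1\})$; for fixed $x$, pick $L\in\mathcal P$ with $x\in L$ (possible since $\mathcal P$ covers $K$) and observe that $g\mapsto e(g,x)=g(x)$ is constant, equal to $f(x)$, on the basic neighbourhood $N(f,L)$ of $f$, hence locally constant. I would also record once and for all that extremally disconnected compacta are zero-dimensional, that each $N(f,L)$ is clopen in $E$, and that a subset relatively clopen in a closed $P$-set of $K$ is again a closed $P$-set of $K$; these facts are used throughout.

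For the Choquet property I would play the Choquet game on $E$. As $\{N(f,L):f\in E,\ L\in\mathcal P\}$ is a clopen base, a run of the game may be assumed to be a decreasing sequence $N(f_0,L_0)\supseteq N(f_1,L_1)\supseteq\cdots$, and since a nonempty $N(f',L')\subseteq N(f,L)$ forces $L\subseteq L'$ and $f'|_L=f|_L$, such a run produces an increasing chain $L_0\subseteq L_1\subseteq\cdots$ in $\mathcal P$ together with functions satisfying $f_{n+1}|_{L_n}=f_n|_{L_n}$; the second player (who seeks $\bigcap_nN(f_n,L_n)\neq\emptyset$) wins exactly when there is a clopen set $C\subseteq K$ with $C\cap L_n=A_n:=f_n^{-1}(1)\cap L_n$ for all $n$. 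Writing $A=\bigcup_nA_n$ and $B=\bigcup_n(L_n\setminus A_n)$ --- disjoint $F_\sigma$-sets, each a countable increasing union of closed $P$-sets --- such a $C$ exists if and only if $\overline A\cap\overline B=\emptyset$, and the $P$-set hypothesis already delivers the one-sided separations $\overline A\cap B=\emptyset$ and $A\cap\overline B=\emptyset$ for free (a closed $P$-set disjoint from an $F_\sigma$-set $\bigcup_kF_k$ lies in the $G_\delta$-neighbourhood $\bigcap_k(K\setminus F_k)$ of itself, hence misses the closure of that $F_\sigma$-set). The crux --- and the step I expect to be the main obstacle --- is to make the second player force the full two-sided separation $\overline A\cap\overline B=\emptyset$. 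Here one uses that a countable increasing union of closed $P$-sets is again a $P$-set (a $G_\delta$ containing it is a neighbourhood of each member, hence of the union) and that, by extremal disconnectedness, the closure of an open set is clopen; the second player should play its moves --- enlarging the current $P$-set within $\mathcal P$ and taking its functions to be indicators of clopen sets $C_n$ --- so that the successive clopen approximations stay coherent enough that the closure $C$ of $\bigcup_nC_n$ (clopen by extremal disconnectedness) has the prescribed traces $C\cap L_n=A_n$, the $P$-set property applied to the closed $P$-sets $L_n\setminus A_n\subseteq K\setminus\bigcup_nC_n$ then finishing the verification. Arranging the bookkeeping so that this succeeds against an adversary free to reveal larger $P$-sets and fresh values at every later stage is the technical heart of the proof, and it is exactly here that both extremal disconnectedness and the $P$-set property are indispensable.

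Finally, for the failure of the Baire property I would use the standard fact that a $\{0,1\}$-valued map has the Baire property on a space precisely when it is continuous on some comeager subset; since $E\times K$ is Baire (indeed Choquet, being the product of the Choquet space $E$ and the compact space $K$) and a non-meager Borel set contains, off a meager set, a basic rectangle, it is enough to prove the following claim: for every $f\in E$, $L\in\mathcal P$ and nonempty clopen $V\subseteq K$, both $\{(g,x):g(x)=0\}$ and $\{(g,x):g(x)=1\}$ are non-meager in $R:=N(f,L)\times V$. Granting the claim: a non-meager Borel $Z$ with $e|_Z$ of the Baire property would yield a comeager subset of $Z$ on which $e$ is continuous, hence (as $Z$ contains a basic rectangle off a meager set) a comeager subset $Z_0$ of some basic rectangle $R$ with $e|_{Z_0}$ continuous; picking $(f_1,x_1)\in Z_0$, setting $c=e(f_1,x_1)$ and using continuity to choose a basic sub-rectangle $R'\subseteq R$ with $e\equiv c$ on $Z_0\cap R'$ would contradict the claim, since $\{e=1-c\}\cap R'$ is non-meager in $R'$ and must meet the comeager set $Z_0\cap R'$. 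To prove the claim, fix a nonempty clopen $C\subseteq V$ with $C\cap L=\emptyset$ (possible because $L$ is nowhere dense): ``flipping on $C$'', $\Phi(g)(y)=g(y)$ for $y\notin C$ and $\Phi(g)(y)=1-g(y)$ for $y\in C$, is an involutive self-homeomorphism of $E$ that carries $N(f,L)$ onto itself, so $\Phi\times\mathrm{id}_C$ is a self-homeomorphism of the open subset $N(f,L)\times C\subseteq R$ which interchanges $\{e=0\}$ and $\{e=1\}$; as $N(f,L)\times C$ is Baire (an open subspace of the Baire space $E\times K$) and is partitioned by these two sets, neither of them can be meager in it, hence neither is meager in $R$.
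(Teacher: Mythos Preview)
Your separate-continuity argument and your flip-homeomorphism proof that both level sets of $e$ are non-meager in every basic rectangle are correct; the latter is an elegant alternative to the paper's approach and the deduction from it of the Baire-property failure on non-meager Borel sets goes through as you sketch. The paper instead runs a single inductive construction that handles the Choquet property and the Baire-property failure simultaneously: given a rectangle $E'\times K'$ and closed nowhere dense $F_n$, it chooses $N(f_n,L_n)\times B_n\subseteq(E'\times K')\setminus F_n$ and plants witness points $x_n,y_n\in B_{n-1}$ with $f_n(x_n)=0$, $f_n(y_n)=1$, then produces $f\in\bigcap_nN(f_n,L_n)$ and cluster points $x_0,y_0\in\bigcap_nB_n$ with $e(f,x_0)=0$, $e(f,y_0)=1$. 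Your decoupled route is legitimate and the flip argument is a genuine simplification of that half.

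Where your proposal has a gap is the Choquet argument. You flag as ``the technical heart'' the problem of having player~II choose clopen indicators $C_n$ with careful bookkeeping so that an adversary cannot spoil the separation $\overline A\cap\overline B=\emptyset$, and you leave this unresolved. But no strategy is required: \emph{every} decreasing sequence $N(f_0,L_0)\supseteq N(f_1,L_1)\supseteq\cdots$ of basic neighbourhoods already has nonempty intersection. With $A_n=f_n^{-1}(1)\cap L_n$ and $B_n=L_n\setminus A_n$ as you set them up, each $A_n$ is a closed $P$-set (a clopen trace on the $P$-set $L_n$), and since $K\setminus B$ is a $G_\delta$ containing $A_n$ there is a clopen $U_n$ with $A_n\subseteq U_n\subseteq K\setminus B$; then $U:=\bigcup_nU_n$ is open, contains $A$, and misses $B$. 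Symmetrically one obtains an open $V\supseteq B$ disjoint from $U$. Extremal disconnectedness now gives $\overline U\cap\overline V=\emptyset$ outright (disjoint open sets have disjoint closures, since $\overline U$ is clopen), hence $\overline A\cap\overline B=\emptyset$ and $\bigcap_nN(f_n,L_n)\ne\emptyset$. You had all of these ingredients on the table; the point you missed is that they combine automatically, with no input from player~II.
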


\begin{proof}
The topology in the function space $E=C(K,\{0,1\})$ being stronger than the pointwise topology, the evaluation map $e$ is separately continuous, cf. (2.1), (2.2).

We shall show that, whenever $E'\times K'$ is a nonempty open rectangle in $E\times K$ and
$$
F_1,\,F_2,\,\dots
$$
are closed nowhere dense sets in $E\times K$, $e$ is not constant on $(E'\times K')\setminus\bigcup\limits_n F_n$. This will show that $E$ is a Baire space and that $e$ restricted to any non-meager Borel set in $E\times K$ fails the Baire property. We shall proceed as follows, cf. \cite[Lemma 2.3]{BP1}, \cite{BP} and \cite{M}.

We shall pick inductively basic neighbourhoods
$$
N(f_1,L_1)\supseteq N(f_2,L_2)\supseteq \dots
$$
in the space $E$, nonempty open-and-closed sets
$$
K=B_0\supseteq B_1\supseteq B_2\supseteq \dots
$$
in the space $K$ and points $x_n,y_n\in L_n\cap B_{n-1}$, $n=1,2,\dots$, such that
$$
N(f_n, L_n)\times B_n \subseteq (E'\times K')\setminus F_n,\eqno(2.3)
$$
$f_n(x_n)=0$ and $f_n(y_n)=1$ for every $n\geq 1$.

Notice that, by (2.1), $L_{1} \subset L_{2} \subset \ldots \, $ and $f_{n+1}$ coincides with $f_{n}$ on $L_{n}$.

Assume that sets $B_0, B_1, \dots , B_{n-1}$, points $x_i, y_i$ for $i\leq n-1$ and $N(f_i,L_i)$  for $i\leq n-1$ are already defined. At the $n$-th stage, we choose first
$$N(f,L)\times B_n \subseteq (E'\times K')\setminus F_n$$ where $L\in\mathcal P$, $B_n\subseteq B_{n-1}$, and, if $n>1$, also $N(f,L)\subseteq N(f_{n-1},L_{n-1})$ (in particular, $L_{n-1}\subseteq L$).

Since $L$ is closed and nowhere dense,  one can find distinct points $x_n,y_n\in B_{n-1}\setminus L$ and next, one can pick $L_n\in\mathcal P$ such that
$$L\cup \{x_n,y_n\}\subseteq L_n.$$
Then, we choose $f_n\in E$ so that $f_n$ coincides with $f$ on $L$, $f_n(x_n)=0$ and $f_n(y_n)=1$.

Let
$$
A_{n,d}=\{x\in L_n:f_n(x)=d\},\,\, A_d=\bigcup\limits_n A_{n,d},
$$
for $d=0,1$.

The set $K\setminus A_1$ is a $G_\delta$-set containing $A_0$, and $A_{0,n}$ being compact $P$-sets, there are open-and-closed sets $U_{n.0}$ in $K$ such that
$$
A_{n,0}\subseteq U_{n.0}\subseteq K\setminus A_1.
$$
Let $U_0=\bigcup\limits_n U_{n,0}$. Then $K\setminus U_0$ is a $G_\delta$-set in $K$ containing $A_1$, and similarly, there is an open $\sigma$-compact set $U_1$ in $K$ containing $A_1$ and disjoint from $U_0$.

Since $K$ is extremally disconnected, the sets $U_0$ and $U_1$ have disjoint closures in $K$, and therefore, $\overline{A_0}\cap\overline{A_1}=\emptyset$. Any function in $E$ which is zero on $\overline{A_{0}}$ and one on $\overline{A_{1}}$ belongs to the intersection $\bigcap\limits_n N(f_n, L_n)$.

Now let $f\in\bigcap\limits_n N(f_n, L_n)$ and let $x_0$ and $y_0$ be cluster points of sequences $(x_n)$ and $(y_n)$  respectively. Since $x_n,y_n\in B_{n-1}$ for every $n$, $x_0,y_0\in \bigcap\limits_n B_n$. Moreover, $f(x_n)=0$ and $f(y_n)=1$ for every $n$. Therefore, $f(x_0)=0$ and $f(y_0)=1$.
Now, we have, cf. (2.3),
$$(f,x_0), (f,y_0)\in\bigcap\limits_n(N(f_n, L_n)\times B_n)\subseteq 
(E'\times K')\setminus\bigcup\limits_n F_n,$$
$$
e(f,x_0)=f(x_0)=0\quad{\rm and}\quad e(f,y_0)=f(y_0)=1.
$$
Moreover, in the course of the proof, choosing the neighbourhoods $N(f_n,L_n)$ and showing that $$\bigcap\limits_n N(f_n, L_n)\ne\emptyset,$$ we have established also that $E$ is a Choquet space. This completes the proof.
\end{proof}

\section{Proof of Theorem \ref{th:1.1}}

\subsection{The space $X$ of Kunen, van Mill and Mills}

A key element of the construction of a compact space $K$ with the required properties will be the following space $X$ from Example 1.2 in \cite{KvMM}.

The space $X$ is the set of non-decreasing functions $f:\omega_2\to\omega_1+1$, considered as the subspace of the Tychonoff product of $\omega_2$ copies of the space of all ordinals $\leq \omega_1$ endowed with the order topology.

As was pointed out in \cite[Section 3.1]{KvMM}, for every $\alpha<\omega_2$ the set
$$
A_\alpha=\{x\in X:x(\alpha)=\omega_1\}$$
and for every $\xi<\omega_1$ the set
$$
A^\xi=\{x\in X:x(\beta)\leq\xi\,\,{\rm for}\,\,{\rm all}\,\,\beta<\omega_2\}
$$
are nowhere dense closed $P$-sets in $X$, and the collection $\mathcal E$ of these sets covers $X$.

Moreover, the families $\{ A_\alpha:\alpha<\omega_2 \}$ and $\{ A^\xi:\xi<\omega_1 \}$ are increasing. Therefore, for each countable subfamily $\mathcal A$ of $\mathcal E$ there are $\alpha<\omega_2$ and $\xi<\omega_1$ such that
$$
\bigcup\mathcal A \subseteq A_\alpha\cup A^{\xi}.
$$

\subsection{The projective cover $K$ of the space $X$}
Gleason's results \cite{G} (cf. \cite{R}) provide an extremally disconnected compact space $K$ and a continuous irreducible surjection $\pi:K\to X$ onto the Kunen, van Mill and Mills space $X$, considered in subsection 3.1. Let us adopt the notation introduced in this subsection.

Let $\mathcal P$ be the collection of finite unions of elements $\pi^{-1}(A)$, where $A\in\mathcal E$. Then, $\pi$ being irreducible, the collection $\mathcal P$ in the extremally disconnected compact space $K$ has the properties stated at the beginning of section 2, and in effect, Theorem \ref{th:2.1} provides a justification of Theorem \ref{th:1.1}.

\section{An example concerning the Namioka property}

The space $X$ described in Section 3.1 has the Namioka property. We shall check this below, establishing a stronger property that the function space $C(X)$ has a $\tau_p$-Kadec renorming, i.e., there exists a norm $\| \cdot \|$ on $C(X)$, equivalent to the supremum norm, such that the pointwise topology $\tau_p$ and the norm topology coincide on the unit sphere $\{ u \in C(X): \| u \|=1\}$.

The fact that an existence of a $\tau_p$-Kadec norm implies the Namioka property is well-known, but a bit hidden in the literature we are aware of. Therefore, let us briefly explain the situation. Deville and Godefroy \cite{DG} proved that if $C(X)$ has a $\tau_p$-Kadec norm $\| \cdot \|$ which is $\tau_p$-lsc (i.e., the function $u \to \| u \|$ is lsc with respect to $\tau_p$) the $X$ has the Namioka property. More specifically, this follows readily from the proof of Lemma IV-1 in \cite{DG}, cf. \cite{God}, a remark preceding Problem 3.4. Now, as was pointed out by Raja \cite[Proposition 4]{Raj},  all $\tau_p$-Kadec norms on $C(X)$ are $\tau_p$-lsc, cf. also \cite[Proposition 2.2]{BKT}.

The mapping $e:E\times K\to\{0,1\}$ constructed in Sections 2 and 3 gives rise to the following result, related to the works by Bouziad \cite{Bou}, Debs \cite{Debs} and Mykhaylyuk \cite{M1}.

\begin{proposition}\label{pr:1} There is a function $\phi:E\times X\to\{0,1\}$ on the product of a Choquet space $E$ and a compact space $X$ with the Namioka property such that $\phi$ is continuous in the first variable and upper-semicontinuous in the second variable, but no restriction of $\phi$ to a non-meager Borel set in $E\times X$ is continuous.
In particular, $\phi$ has no points of joint continuity and it fails the Baire property.
\end{proposition}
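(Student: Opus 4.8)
The plan is to push the two-valued map $e\colon E\times K\to\{0,1\}$ of Theorem~\ref{th:2.1} — where $K$ is the extremally disconnected compact space obtained from the Kunen--van~Mill--Mills space $X$ in Section~3 and $\pi\colon K\to X$ is the accompanying irreducible surjection — down to $X$ by
$$
\phi(f,x)=\max\{\,f(y):y\in\pi^{-1}(x)\,\},
$$
so that $\phi(f,x)=1$ precisely when $f$ does not vanish on the fibre $\pi^{-1}(x)$. Continuity in the first variable is immediate: each $x\in X$ lies in some $A\in\mathcal E$, so $\pi^{-1}(x)\subseteq\pi^{-1}(A)\in\mathcal P$ and $\phi(\cdot,x)$ is constant on the basic neighbourhood $N(g,\pi^{-1}(A))$ of any $g$, hence locally constant. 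Upper semicontinuity in the second variable is also immediate: for fixed $f$ the set $\{x:\phi(f,x)=1\}=\pi(f^{-1}(1))$ is the continuous image of a clopen subset of the compact space $K$, hence compact, hence closed in $X$. Since $X$ has the Namioka property (established in Section~4) and $E$ is Choquet (Theorem~\ref{th:2.1}), it remains to show that no restriction of $\phi$ to a non-meager Borel set is continuous; together with what precedes, this also yields that $\phi$ has no point of joint continuity and fails the Baire property.

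Because $E$ is Choquet and $X$ is compact, $E\times X$ is Baire, so a non-meager Borel set carrying a continuous restriction of $\phi$ would — having the Baire property — produce a nonempty open rectangle $R=E'\times X'$ and a meager set $M$ on which $\phi$ is constant; hence it suffices to prove
$$
(\star)\qquad\text{$\phi$ takes both values $0$ and $1$ on $R\setminus M$, for every nonempty open rectangle $R$ and every meager $M$.}
$$
Put $g=\mathrm{id}_E\times\pi\colon E\times K\to E\times X$. Since $\pi$ is a continuous irreducible surjection of compact spaces, $g$ is a closed, continuous, irreducible surjection, so $g$ and $g^{-1}$ carry nowhere dense (resp.\ meager) sets to nowhere dense (resp.\ meager) sets in both directions. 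Write $M=\bigcup_nF_n$ with $F_n$ closed nowhere dense. The value $1$ in $(\star)$ is easy: $g^{-1}(R)=E'\times\pi^{-1}(X')$ is a nonempty open rectangle in $E\times K$ and each $g^{-1}(F_n)$ is closed nowhere dense, so by the proof of Theorem~\ref{th:2.1} there is $(f_0,y_0)\in g^{-1}(R)\setminus\bigcup_n g^{-1}(F_n)=g^{-1}(R\setminus M)$ with $f_0(y_0)=1$; then $g(f_0,y_0)=(f_0,\pi(y_0))\in R\setminus M$ and $\phi(f_0,\pi(y_0))=1$.

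The value $0$ in $(\star)$ is the substance: one must produce $(f_0,p)\in R\setminus M$ with $\pi^{-1}(p)\subseteq f_0^{-1}(0)$. My plan is to rerun the inductive construction of the proof of Theorem~\ref{th:2.1} inside $E\times K$ over $g^{-1}(R)$, avoiding the $g^{-1}(F_n)$, with two additions. First, using that $X$ is zero-dimensional and that for the irreducible $\pi$ the set $\pi^{\#}(U)=\{x:\pi^{-1}(x)\subseteq U\}$ is nonempty open in $X$, with $\pi^{-1}(\pi^{\#}(U))\subseteq U$, for every nonempty open $U\subseteq K$, I would take all the clopen sets $B_n$ of the construction to be $\pi$-saturated, $B_n=\pi^{-1}(V_n)$ with $V_n$ clopen in $X$ and decreasing; then $B_\infty:=\bigcap_nB_n=\pi^{-1}(V_\infty)$ with $V_\infty:=\bigcap_nV_n\subseteq X'$, and $\pi^{-1}(p)\subseteq B_\infty$ for every $p\in V_\infty$. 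Secondly, I would carry along a decreasing sequence of nonempty clopen sets $P_n\subseteq V_n$ in $X$ kept disjoint from the sets $C_n$ (finite unions of members of $\mathcal E$) for which $L_n=\pi^{-1}(C_n)$: at stage $n$ this is possible because $\pi^{-1}(P_{n-1})$ is a nonempty clopen part of $B_{n-1}$, so the dense open set $(N(f_{n-1},L_{n-1})\times B_{n-1})\setminus g^{-1}(F_n)$ still meets $N(f_{n-1},L_{n-1})\times\pi^{-1}(P_{n-1})$, which lets one keep $V_n\subseteq P_{n-1}$, after which — $C_n$ being nowhere dense in $X$ — a nonempty clopen $P_n\subseteq V_n\setminus C_n$ exists. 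By compactness of $X$ choose $p\in\bigcap_nP_n\subseteq V_\infty$. One now wants $p\notin\pi(\overline{A_1})$, where $A_0,A_1$ are the sets of the construction; since $\overline{A_1}\subseteq\overline{\bigcup_nL_n}\subseteq\pi^{-1}(\overline{\bigcup_nC_n})$ (as $\bigcup_nL_n=\pi^{-1}(\bigcup_nC_n)$), we get $\pi(\overline{A_1})\subseteq\overline{\bigcup_nC_n}$, so it suffices that $p\notin\overline{\bigcup_nC_n}$. Granting this, $\pi^{-1}(p)$ is disjoint from $\overline{A_1}$, and since $\overline{A_0}\cap\overline{A_1}=\emptyset$ (as in Theorem~\ref{th:2.1}) and $K$ is compact zero-dimensional, there is a clopen $W\supseteq\overline{A_1}$ with $W\cap(\overline{A_0}\cup\pi^{-1}(p))=\emptyset$; then $f_0:=\mathbf 1_W\in\bigcap_nN(f_n,L_n)$ by the argument of Theorem~\ref{th:2.1}, $\pi^{-1}(p)\subseteq f_0^{-1}(0)$ gives $\phi(f_0,p)=0$, and for any $y\in\pi^{-1}(p)\subseteq B_\infty$ we have $(f_0,y)\in\bigcap_n(N(f_n,L_n)\times B_n)\subseteq g^{-1}(R\setminus M)$, whence $(f_0,p)=g(f_0,y)\in R\setminus M$, finishing $(\star)$.

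I expect the main obstacle to be precisely the clause $p\notin\overline{\bigcup_nC_n}$. The collection $\mathcal E$ being covered by the increasing families $\{A_\alpha\}_{\alpha<\omega_2}$ and $\{A^\xi\}_{\xi<\omega_1}$, one has $\bigcup_nC_n\subseteq A_\gamma\cup A^\xi$ for some $\gamma<\omega_2$, $\xi<\omega_1$; but $\overline{\bigcup_nC_n}$ may be strictly larger than $\bigcup_nC_n$ — for instance $\overline{\bigcup_{\alpha<\gamma}A_\alpha}$ can pick up a whole slab $A_\gamma\setminus\bigcup_{\alpha<\gamma}A_\alpha$ when $\gamma$ is a limit — so knowing $p\notin C_n$ for all $n$ is not by itself enough. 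The delicate point is to run the construction so that the reserved point $p$ escapes this closure, and here the specific combinatorics of the Kunen--van~Mill--Mills functions must be used: one arranges $p$ to attain the value $\omega_1$ (so $p$ lies outside every $A^\xi$) while the auxiliary points entering the $L_n$, and the members of $\mathcal E$ used to form them, are selected — exploiting the freedom that remains in choosing the clopen sets $P_n$ and the $L_n$ — so as to keep $p$ out of the relevant $A_\alpha$'s. Fitting this together with the bookkeeping of Theorem~\ref{th:2.1} (the neighbourhoods $N(f_n,L_n)$, the points $x_n,y_n$, the functions $f_n\in E$) is the technical heart of the proof.
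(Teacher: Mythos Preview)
Your setup — the definition of $\phi$, the verification of continuity in the first variable and upper-semicontinuity in the second, the reduction to $(\star)$, and the ``value $1$'' half of $(\star)$ — all match the paper's proof essentially verbatim. The divergence, and the gap, is in the ``value $0$'' half.

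You try to reserve the base point $p$ \emph{during} the induction, via the auxiliary clopen sets $P_n\subseteq V_n\setminus C_n$, and then correctly observe that $p\notin\bigcup_n C_n$ does not by itself give $p\notin\overline{\bigcup_n C_n}$. You then propose to overcome this by diving into the combinatorics of the Kunen--van~Mill--Mills functions, but leave this unresolved. This is where you make life much harder than necessary, and in fact you already have in hand the observation that dissolves the difficulty: you note that $\bigcup_n C_n\subseteq A_\gamma\cup A^\xi$ for suitable $\gamma,\xi$. Since $A_\gamma\cup A^\xi$ is \emph{closed}, this immediately gives $\overline{\bigcup_n C_n}\subseteq A_\gamma\cup A^\xi$, so your worry about the closure ``picking up a whole slab'' is misplaced here — the closure sits inside a single closed nowhere dense $P$-set $D:=A_\gamma\cup A^\xi$, and hence $L:=\pi^{-1}(D)\in\mathcal P$ is a closed nowhere dense $P$-set in $K$ containing every $L_n$.

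The paper exploits exactly this, and defers the choice of the point until \emph{after} the induction. Run the induction with $B_n=\pi^{-1}(V_n)$ (no $P_n$, and the points $x_n,y_n$ are dropped). Afterwards, pick $L\in\mathcal P$ with $\bigcup_n L_n\subseteq L$ as above. Since $L$ is nowhere dense, choose $c_n\in B_n\setminus L$; since $L$ is a $P$-set and $K\setminus\{c_1,c_2,\dots\}$ is a $G_\delta$ containing $L$, there is an open $U\supseteq L$ missing every $c_n$. Then $B_n\setminus U$ are nonempty nested compacta, so $\bigcap_n B_n\setminus U\neq\emptyset$, and as both $\bigcap_n B_n=\pi^{-1}(\bigcap_n V_n)$ and $L$ are $\pi$-saturated, there is $x\in X$ with $\pi^{-1}(x)\subseteq\bigcap_n B_n\setminus L$. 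Now $\overline{A_0},\overline{A_1},\pi^{-1}(x)$ are pairwise disjoint closed sets (the first two by the Section~2 argument, and $\overline{A_i}\subseteq L$), so one can pick $g,h\in\bigcap_n N(f_n,L_n)$ with $g\equiv 0$ and $h\equiv 1$ on $\pi^{-1}(x)$, giving both values of $\phi$ at $(g,x),(h,x)\in(E'\times X')\setminus\bigcup_n H_n$. No interleaving of $P_n$'s with the bookkeeping, and no appeal to the internal structure of points of $X$, is needed.
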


\begin{proof} (A). The Choquet space $E$ and the compact space $X$ are the spaces considered in Sections 2 and 3. Let us recall that $E=C(K,\{0,1\})$ is the set of continuous function $f:K\to\{0,1\}$ on a compact space $K$ which maps onto $X$ by a continuous irreducible surjection $\pi:K\to X$, and basic neighbourhoods $N(f,L)$ of points $f\in E$ are defined by $(2.1)$.

Let $e:E\times K\to\{0,1\}$ be the separately continuous map defined by $(2.2)$. We shall define $\phi:E\times X\to\{0,1\}$ by the formula
$$
\phi(f,x)=\left\{\begin{array}{ll}
                       0, & {\rm if} \,\, f \,\,{\rm is} \,\, {\rm zero} \,\, {\rm on} \,\,\pi^{-1}(x),\\
                       1, & {\rm otherwise}.
                       \end{array}
 \right.
$$
Let us fix $x\in X$, and let $A \in \mathcal E$ contain $x$, cf. 3.1. Then, by 3.2, $L =\pi ^{-1}(A) \in \mathcal P$ and $\pi^{-1}(x)\subset L$. If $f\in E$, then  $\phi$ is constant on $N(f,L)\times\{x\}$, cf. 2.1, i.e. the map
$$
f\rightarrow \phi(f,x)
$$
is locally constant, hence $\phi$ is continuous in the first variable. On the other hand, if we fix $f\in E=C(K,\{0,1\})$, the set
$$
 \{x\in X:\phi(f,x)=0\}=\{x\in X: \pi^{-1}(x)\subseteq f^{-1}(0)\}
$$
is open in $X$ and hence $\phi$ is upper-semicontinuous in the second variable (let us notice that, $\pi$ being irreducible, the set $\{x\in X:\phi(f,x)=0\}$ is an open domain, cf. \cite{K}, and therefore, the map
$$
x\rightarrow \phi(f,x)
$$
is also quasi-continuous, cf. \cite{M1}).

$(B)$. To check that no restriction of $\phi$ to any non-meager Borel set in $E\times X$ is continuous, we shall slightly modify the reasoning in the proof of Theorem \ref{th:2.1}.

Let $E'\times X'$ by any nonempty rectangle in $E\times X$ and let
$$
H_1, H_2, \dots
$$
be closed nowhere dense sets in $E\times X$. We have to check that $\phi$ takes both values on $(E'\times X')\setminus \bigcup\limits_i H_i$.

Let $K'=\pi^{-1}(X')$ and $F_i=(id\times \pi)^{-1}(H_i)$. Since $\pi$ is irreducible, the sets $F_i$ are closed and nowhere dense in $E\times K$.

Let us adopt the notations of the proof of Theorem \ref{th:2.1}. As in this proof,  we shall choose inductively basic neighbourhoods $N(f_n,L_n)$ in $E$ and open-and-closed sets $B_n$ in $K$, introducing the following change: we shall not need the points $x_n$ and $y_n$, but we demand that $B_n=\pi^{-1}(V_n)$, where $V_n$ is open-and-closed in $X$ (since $\pi$ is irreducible, such a choice of $B_n$ is always possible).

We shall also modify the final part of the proof of Theorem \ref{th:2.1}. Having defined the sets
$$
L_1\subseteq L_2\subseteq \dots
$$
we appeal to the property of the space $X$ indicated at the end of Section 3.1, to pick $L\in\mathcal P$ such that $\bigcup\limits_n L_n\subseteq L$. Since $L$ is closed and nowhere dense, there are points $c_n\in B_n\setminus L$ and let
$$
C=\{c_1,c_2,\dots\}.
$$
Then $K\setminus C$ is a $G_\delta$-set containing the $P$-set $L$ and hence, there is an open neighbourhood $U$ of $L$ disjoint from $C$. The compact sets
$$
B_1\setminus U\supseteq B_2\setminus U\supseteq \dots
$$
are nonempty, and so is the set  $\bigcap\limits_n B_n\setminus U$, disjoint from $L$. Since both $\bigcap\limits_n B_n = \pi^{-1}(\bigcap\limits_n V_n)$ and $L$ are full preimages under $\pi$, there is $x\in X$ such that
$$
\pi^{-1}(x)\subseteq \bigcap\limits_n B_n\setminus L.
$$
Now, the reasoning in Section 2 provides two continuous functions $f,g\in C(K,\{0,1\})$  such that $g|_{L_n}=f_n|_{L_n}=h|_{L_n}$, $g$ is zero on $\pi^{-1}(x)$
and $h$ is one on $\pi^{-1}(x)$. In effect,
$$
(g,x), (h,x) \in (E'\times X')\setminus \bigcup\limits_n H_n,
$$
but $\phi(g,x)=0$ and $\phi(h,x)=1$.

$(C)$. To complete the proof, we have to make sure that the Kunen, van Mill, Mills space $X$ has the Namioka property. In fact, using the result from \cite{BKT}, we shall show that $X$ has the stronger property: the Banach algebra $C(X)$ of real-valued functions on $X$ has a $\tau_p$-Kadec norm.

For every $\alpha < \omega_2$ and every $\xi\leq \omega_1$ we put
$$
X^\alpha_\xi=\{x\in X:x(\gamma)=\xi\,\,{\rm for}\,\,{\rm all}\,\,\alpha\leq\gamma<\omega_2\}.
$$
Moreover, for each $\beta< \alpha$ let $p^{\alpha,\beta}_\xi:X^\alpha_\xi\to X^\beta_\xi$ be the natural retraction. Using a transfinite induction on $\alpha< \omega_2$ we shall show that all spaces $C(X^\alpha_\xi)$, $0\leq\xi\leq\omega_1$, have a $\tau_p$-Kadec renorming.

Since all spaces $X^0_\xi$ are singletons,  all spaces $C(X^0_\xi)$ have a $\tau_p$-Kadec renorming. Assume that for some $\alpha < \omega_2$ all spaces $C(X^\beta_\xi)$, $\beta<\alpha$ and $\xi\leq\omega_1$, have a $\tau_p$-Kadec renorming. Let $\alpha$ be a limit ordinal. Applying to the inverse sequence $\{X^\gamma_\xi; p^{\beta,\gamma}_\xi:0\leq\gamma<\beta\leq\alpha\}$ \cite[Lemma 4.7]{BKT}, we infer that $C(X^\alpha_\xi)$ has a $\tau_p$-Kadec renorming for every $\xi\leq\omega_1$.

Now, let $\alpha=\beta+1$. We fix $\xi\leq\omega_1$. For every $\eta\leq \xi$ we put
$$
Y_\eta=\{x\in X^\alpha_\xi:x(\beta)=\eta\} \,\,\,{\rm and}\,\,\,Z_\eta=\{x\in X^\alpha_\xi:x(\beta)\leq\eta\}.
$$
Notice that every $Y_\eta$ is homeomorphic to $X^\beta_\eta$ and consequently, every space $C(Y_\eta)$ has a $\tau_p$-Kadec renorming. Using a transfinite induction on $\eta\leq \xi$ we shall show also that every space $C(Z_\eta)$ has a $\tau_p$-Kadec renorming.

It is obvious that the space $C(Z_0)$ has a $\tau_p$-Kadec renorming. Assume that for some $\eta\leq \xi$ all spaces $C(Z_\zeta)$, $\zeta<\eta$, have $\tau_p$-Kadec renormings. If $\eta=\zeta+1$ then $C(Z_\eta)$ has a $\tau_p$-Kadec renorming, because $Z_\eta$ is the direct sum of $Z_\zeta$ and $Y_\eta$.

Now let $\eta$ be a limit ordinal and let, for each $\tau<\zeta\leq\eta$,  $\,q_{\zeta,\tau}:Z_\zeta\to Z_\tau$ be the retraction
$$
q_{\zeta,\tau}(x)(\gamma)=\left\{\begin{array}{ll}
                        \min \{x(\gamma),\tau\}, & \gamma\leq \beta;\\
                         \xi, &\gamma\geq\alpha.
                       \end{array}
 \right.
$$
The inverse sequence $\{Z_\tau; q_{\zeta,\tau}:0\leq\tau<\zeta\leq\eta\}$ satisfies the assumptions of \cite[Lemma 4.7]{BKT} and therefore, $C(Z_\eta)$ has a $\tau_p$-Kadec renorming. Hence, since $Z_\xi=X^\alpha_\xi$, $C(X^\alpha_\xi)$ has a $\tau_p$-Kadec renorming.

In effect, for every $\alpha<\omega_2$ and $\xi\leq \omega_1$ the space $C(X^\alpha_\xi)$  has a $\tau_p$-Kadec renorming, and so does $X$, as $X$ is the limit of the inverse system of the spaces  $X^{\alpha}_{\omega_1}$, $\alpha<\omega_2$, cf. \cite[Lemma 4.7]{BKT}.
\end{proof}

\section{Comments}
\subsection{Some cardinality issues} In our proof of Theorem \ref{th:1.1}, the weight of each of the spaces $E$ and $K$ is $2^{\aleph_2}$. In some models of ZFC, one can have $2^{\aleph_2}=2^{\aleph_0}$, cf. \cite[Theorem IV.7.17]{Ku}. As we pointed out, the Baire space $E$ is Choquet, and by \cite[Theorem 2.2]{M}, the existence of a separately continuous everywhere discontinuous function on the product $B\times L$ of a Choquet space $B$ and a compact space $L$, implies that the weight of both $B$ and $L$ is at least $2^{\aleph_0}$.

Our approach does not provide in ZFC such spaces with the minimal possible weight. Let us notice, however, that there are (in ZFC) separately continuous maps $f:B\times L\to \{0,1\}$, with $B$ Choquet and $L$ compact, both of weight $2^{\aleph_0}$, which are not Borel measurable, cf. \cite{BP}.

\subsection{Concerning Theorem \ref{th:2.1}}
In this theorem,  it is enough to assume that that $K$ is an $F$-space, cf. \cite{GJ}. The reasoning justifying Theorem \ref{th:2.1} requires in this case only minor modifications.

Let us also notice that the property of Kunen, van Mill, Mills space stated at the end of Section 3.1 yields that the Choquet space $E$ we construct in the proof of Theorem \ref{th:1.1} is also a $P$-space, i.e. all $G_\delta$ sets in $E$ are open.

\subsection{The induced map $e^* : E \to C(K)$} The evaluation map $e:E\times K\to\{0,1\}$ in Theorem \ref{th:2.1} induces a map $e^* : E \to C(K)$ into the Banach algebra of real-valued continuous functions on $K$, defined by $e^*(f)(x)=e(f,x)$. Since $e$ is separately continuous, $e^*$ is continuous with respect to the pointwise topology in $C(K)$. However, in fact $e^*$ is continuous with respect to the weak topology in the Banach space $C(K)$. To that end, it is enough to make sure that for each Radon measure $\mu$ on $K$, the support of $\mu$ is contained in some element of the collection $\mathcal P$.

We will show first that for every $L\in\mathcal P$ there is an open neighbourhood $U(L)$ of $L$ such that $\mu (U(L) \setminus L) = 0$. Let $L\in\mathcal P$. Then there is a $\sigma$-compact set $F$ disjoint from $L$ with $\mu (K\setminus L) = \mu (F)$. Since $L$ is a $P$-set, there is an open neighbourhood $U(L)$ of $L$ which is disjoint from $F$. Now, we have $\mu (U(L) \setminus L) = 0$.

There are $L_1, ...L_n \in \mathcal P$ such that $U(L_1), ..., U(L_n)$ cover $K$, and in effect, the support of $\mu$ is contained in the union of $L_1, ...L_n$, which is contained in an element of $\mathcal P$.

Let $u \in C(K)$ be a non-zero function, and let $M_{u}: C(K) \to C(K)$ be the multiplication operator $M_{u}(f)=u\cdot f$.

We shall check that the composition $M_{u} \circ e^{\ast}: E \to C(K)$ has no points of continuity with respect to the supremum norm in $C(K)$. To this end let us fix $\delta >0$ and a nonempty open set $W$ in $K$ such that $|u(x)|\geq \delta$ for $x \in W$.

Let us pick any nonempty open set $U$ in $E$. Since $e$ is non-constant on $U \times W$, there is $(s,a) \in U \times W$ with $e(s,a)=0$ and let $W' \subset W$ be an open neighbourhood of $a$ in $K$ such that $e$ is zero on $\{s\} \times W'$. Then, $e$ being non-constant on $U \times W'$, we get $(t,b) \in U \times W'$ with $e(t,b)=1$. In effect, $b \in W$ and $e^{\ast}(s)(b)=0$, $e^{\ast}(t)(b)=1$. Therefore $\| M_{u} \circ e^{\ast}(s) - M_{u} \circ e^{\ast}(t)\| \geq \delta$, i.e., the norm-oscillation of $M_{u} \circ e^{\ast}$ on $U$ is at least $\delta$.

\subsection{Concerning Proposition \ref{pr:1}}

One can show that if a Baire space $B$ is $\beta$-defavorable in the game $\mathcal J'(B)$ defined by Debs \cite{Debs}, then any function $f:B\times Z\to\{0,1\}$ on the product of $B$ and a compact space $Z$, continuous in the first variable and upper-semicontinuous in the second variable, has points of joint continuity.

The function $\phi$ constructed in the proof of Proposition \ref{pr:1} is also quasi-continuous in the second variable. We refer the reader to \cite{Bou} and \cite{M1} for some positive results concerning points of joint continuity of functions of two variables, continuous in one variable and quasi-continuous in the other one.

\end{document}